\numberwithin{equation}{section}
\newtheorem{thm}{THEOREM}[section]
\newtheorem{cor}[thm]{Corollary}
\newtheorem{prop}[thm]{PROPOSITION}
\newtheorem{quest}[thm]{Question}
 \theoremstyle{definition}
\theoremstyle{remark}
\newtheorem{rem}{Remark}[section]
\newcommand{\tref}[1]{Theorem~\ref{#1}}
\newcommand{\cref}[1]{Corollary~\ref{#1}}
\newcommand{\pref}[1]{Proposition~\ref{#1}}
\newcommand{\R}{\mathbb{R}}
\begin{document}
\pagebreak


\title{On smoothness of isometries between orbit spaces }

\author{Marcos Alexandrino and Alexander Lytchak}


\keywords{Isometric groups action, singular Riemannian foliations, invariant function, basic form}

\begin{abstract}
We  discuss the connection between the smooth and metric structure on quotient spaces,
prove smoothness of isometries in special cases and discuss an application to  a conjecture of Molino.

\end{abstract}

\thanks{The first author was  supported by  a CNPq-Brazil research fellowship and partially supported by Fapesp. The second author was supported   by a  Heisenberg grant of the DFG and by the  SFB  878
{\it Groups, geometry and actions}}

\maketitle
\renewcommand{\theequation}{\arabic{section}.\arabic{equation}}
\pagenumbering{arabic}


\section{Smoothness of isometries}

\subsection{The main question}
Given an action of a closed group of isometries $G$ on a  Riemannian manifold $M$,
the quotient $X=M/G$ is equipped with the  natural  quotient metric and a natural quotient ``smooth structure''. The \emph{smooth structure} on $X$ is  given by
the sheaf of ``\emph{smooth functions}"   $\mathcal C ^{\infty} (U) := (\mathcal C^{\infty } (\pi ^{-1} (U)))^G$,
where $U\subset X$ is an arbitrary open set and $\pi:M\to X$ is the canonical projection.
One says that a map $F: M/G \to N /H$  between two quotient spaces is \emph{smooth} if the pull-back by $F$ sends smooth functions on $N/H$  to smooth functions on $M/G$.
If $F$ is bijective and smooth together with its inverse, it is called a diffeomorpism.

 If $X$ is (isometric to) a smooth Riemannian manifold, i.e., if $M \to X$ is a fiber bundle,
then the quotient smooth structure  on $X$ is  the same as the underlying smooth structure of the Riemannian manifold $X$.
In this case, the classical theorem  of Myers-Steenrod  (\cite{msteen}) states that the metric of $X$ determines its smooth structure. This motivates  the following natural and  simple-minded question:

\begin{quest}  \label{firstq} Does the metric of a quotient space $X$ determine its smooth structure?
In other words,   given two manifolds with isometric actions $(M,G)$ and $(N,H)$ and an  isometry
$I:M/G \to N /H$, is $I$  always  a diffeomorphism?
\end{quest}

\subsection{Comments on the smooth structure} \label{subsec2}
Before we proceed, we would like to make a few comments about the smooth structure on quotients.
First and most important: the terms ``smooth structure", ``smooth function", ``diffeomorphism" may be misleading,
since the quotient $X=M/G$ almost never is a smooth manifold.  We adopt the notation of \cite{Schwarz3}
and hope that the terms are not too ambiguous. 

 By definition, the projection $\pi:M \to X=M/G$ is smooth; a map  $F:X\to  Y$ is smooth iff 
$F\circ \pi $ is smooth. Note that the smooth structure (and the metric on the quotient) does not change
if one replaces the action of $G$ by an isometric action of another group, \emph{orbit equivalent} to the action of $G$, i.e., having the same orbits.  The question when a smooth map (a diffeomorphism) $F:M/G \to N/H$  (of a space to itself) can be lifted to a smooth map (diffeomorphism) between the total spaces $M$ and $N$  is highly non-trivial 
(cf. \cite{Strub}, \cite{Schwarz3}, \cite{Schwarz1}).

Clearly, one can always reduce question of smoothness to the the case where the manifolds in question are connected.
Below we will always implicitly  make this connectedness assumption. 
 Using distance functions to orbits, one easily constructs  arbitrary fine smooth partitions of unity in quotient spaces. 
In particular, this implies that all questions concerning smoothness  are local.  

 Let  $p\in M$ be an arbitrary point and let $V_p ^{\perp}$ be the normal space to the fiber through $p$.
The isotropy group $G_p$ acts on $V_p ^{\perp}$  defining the \emph{slice  representation} at $p$.
The famous slice theorem (cf. \cite{Bredon})  says that the 
exponential map $\exp:V_p ^{\perp} \to M$ descends to a local diffeomorphism $O\subset V_p ^{\perp}/G_p \to M/G$ from a small
neighborhood $O$ of $0$ to a small neighborhood $O'$ of $x=\pi (p) \in X$.  The space $V_p ^{\perp}/G_p$ is the tangent space of $X$ at $x$ (in the sense of metric geometry) and the above diffeomorphism will also be denoted by $\exp _x$.

 Let now $I:X=M/G \to Y=N/H$ be an isometry and let $x\in X$ a point with $y=I(x)$. Then $I$ sends geodesics
 starting at $x$ to geodesics starting in $y$. Thus, on a small neighborhood of $x$ we have 
$I= \exp _y  \circ D_xI \circ \exp_x ^{-1}$, for the induced isometry $D_x I$ between the metric tangent spaces
$D_x I: T_x X \to T_y Y$.  Since the exponential maps are local diffeomorphisms, $I$ is a local diffeomorphism at $x$ if and only if the isometry $D_x I$ is a diffeomorphism between the quotient spaces
$T_x X:= V_p ^{\perp} /G_p \to T_y Y =V_q ^{\perp} /H_q$. Here, $p$ and $q$ are arbitrary points in the orbits corresponding to $x$ and $y$ respectively.  This reduces our main Question \ref{firstq} to the case of representations.

Recall that, for a representation of  a compact group $G$ on a vector space $V$, the set of $G$-invariant polynomials is finitely generated by some polynomials $f_1,...,f_n$, due to a  theorem of Hilbert. By a theorem
of Schwarz, any smooth $G$-invariant function on $V$ is a smooth function of the polynomials
 $(f_1,...,f_n)$ (\cite{Schwarz2}). Thus to prove that an isometry between quotients of representation 
is a diffeomorphism, it is sufficient (and necessary) to prove that it preserves invariant polynomials.

The last remark leads to an interesting question in metric geometry. Namely, any $G$-invariant polynomial
of degree $n$ on a representation vector space $V$ of $G$ defines a map on the quotient space $V/G$, whose restriction to each geodesic is polynomial of degree at most $n$.  It seems interesting to recognize  such functions  metrically:

\begin{quest} \label{questalex}
Let $X$ be an Alexandrov space. Let $f$ be a function whose restriction to each geodesic is a polynomial
of degree at most $n$. Does it have some implication on the structure of $X$?
\end{quest}

 For $n=1$, the answer is yes and a splitting result under the assumption that $X$ does not have boundary
has been obtained in  \cite{AB}.  It corresponds to the easy statement that $G$-invariant
linear functions on $V$ can exist only if the set of fixed points of $G$ is non-trivial. 
 Already for $n=2$, Question \ref{questalex} seems to be much harder.

\subsection{Known results}
Probably, the main contribution of this note containing more questions than results, is the observation that Question \ref{firstq}  is non-trivial and interesting for applications and for its own sake.  We are going to explain now, that our Question \ref{firstq} has been answered  in special cases by
some famous theorems.   Beyond the theorem of Myers-Steenrod, mentioned above, there are two important algebraic results. 

The first result is the famous restriction theorem of  Chevalley (\cite{Chev}).
Let $\mathfrak p$ be the tangent space of a symmetric space $M=G/K$ with the 
induced isotropy representation of $K$. Let $\mathfrak a$ be a maximal flat in $\mathfrak p$ and let 
$W$ be the (finite) stabilizer of $\mathfrak a$ in $K$. 
The isotropy representation is \emph{polar}, meaning that 
the embedding $\mathfrak a \to \mathfrak p$ induces an isometry $I:\mathfrak a /W \to \mathfrak p /K$;
we refer to \cite{Alexgor}, \cite{Michor1}, \cite{LT} for basics facts about  polar actions.
The theorem of Chevalley says that this isometry induces an isomorphism between the rings of invariants, i.e., that $I$  is a smooth map.

 The second result is the  theorem of Luna and Richardson (\cite{lr}).  Given an isometric action of
$G$ on $M$ with non-trivial principal isotropy group $H =G_p$ of a regular point  $p\in M$,
 let $\bar M$ be the connected component 
through $p$ of the set of fixed points $M^H$ of $H$.  The normalizer $N(H)$ of $H$ in $G$ stabilizes 
the \emph{generalized section} $\bar M$ (cf. \cite{got}, \cite{Magata}) and the embedding $\bar M \to M$ induces an isometry 
$I:\bar M/N(H) \to M /G$ (to be precise, if $M^H$ is not connected, one may need to replace $N(H)$ by an open 
subgroup of it) . 

 If $M$ is a Euclidean vector space then  the main result of \cite{lr} (after applying a complexification and  using  \cite{Schwarz2}, cf. \cite{str})   says that $I$ is a diffeomorphism.  Applying the reduction procedure to slice representations
from the previous subsection and using that the tangent space of a generalized section is a generalized section of the slice representation,
we deduce that the reduction map $I$ is a diffeomorphism for  arbitrary $M$.

\begin{rem}
In fact, applying Theorem 2.2 from \cite{lr}, instead of the special case we have applied above, 
one deduces that the reduction of $(M,G)$ to any generalized section $(\Sigma, N)$ in the sense of \cite{got}
induces a diffeomorphism $I:\Sigma /N\to M/G$. 
\end{rem}

Finally, we mention the extension of Chevalley's restriction theorem  to general polar actions
by Michor (\cite{Michor1}, \cite{Michor2}).

\subsection{Some new simple observations}

Our first observation is that the analogue of the theorem of Myers-Steenrod is true
if the quotient is isometric to a Riemannian orbifold:

\begin{thm} \label{firstthm}
Let $X= M/G$ be as above. If there is an isometry $I:X \to B$, where $B$ is a smooth Riemannian orbifold, then
$I$ is a diffeomorphism between the quotient smooth structure of $X$ and the underlying smooth orbifold structure of $B$.
\end{thm}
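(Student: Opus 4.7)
The plan is to localize the question to slice representations, deduce polarity of each slice representation from the orbifold hypothesis, and then combine Chevalley's restriction theorem with the elementary principle that a continuous function on a Euclidean space whose restriction to every affine line is a polynomial of bounded degree is itself a polynomial.

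By the reductions in Section~\ref{subsec2}, smoothness of $I$ is local, and at each $x\in X$ it reduces to showing that the induced tangent isometry
\[
D_x I \colon V/H \longrightarrow \R^n/\Gamma
\]
is a diffeomorphism, where $(V,H)=(V_p^\perp, G_p)$ is the slice representation at a lift $p$ of $x$, and $\R^n/\Gamma$ is the Euclidean local model of $B$ at $y=I(x)$ with $\Gamma\subset O(n)$ finite. Since $V/H$ is now isometric to the flat Euclidean orbifold $\R^n/\Gamma$, it is itself a flat Alexandrov space. The theorem of Lytchak--Thorbergsson characterizing polar representations by flatness (bounded curvature) of the quotient then forces $H\curvearrowright V$ to be polar. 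Pick a section $\mathfrak{a}\subset V$ with its finite Weyl group $W$. By the polar extension of Chevalley's restriction theorem (Michor, \cite{Michor1,Michor2}), the natural isometry $\mathfrak{a}/W\to V/H$ is already a diffeomorphism of smooth structures, so $D_xI$ corresponds to an isometry $J\colon \mathfrak{a}/W\to \R^n/\Gamma$ of two Euclidean Riemannian orbifolds, and it suffices to check that $J$ is a diffeomorphism.

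By Schwarz's theorem, the smooth structure on each side is generated by invariant polynomials, so one only needs to show that $J$ and $J^{-1}$ carry invariant polynomials to invariant polynomials. Fix $g\in\R[\R^n]^\Gamma$ of degree $d$ and let $\tilde g$ denote the $W$-invariant lift of $g\circ J$ to $\mathfrak{a}$. Every affine line $\ell\subset\mathfrak{a}$ projects to a geodesic of $\mathfrak{a}/W$, which under the isometry $J$ becomes a geodesic of $\R^n/\Gamma$ lifting to a straight line in $\R^n$; hence $\tilde g|_\ell$ is a polynomial of degree at most $d$ for every line $\ell$. The classical characterization of polynomials on $\R^n$ by their line restrictions then forces $\tilde g$ to be a polynomial of degree at most $d$ on $\mathfrak{a}$. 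The symmetric argument applied to $J^{-1}$ finishes the proof that $J$, and hence $I$, is a diffeomorphism. The main obstacle in this plan is the polarity step, which depends on the nontrivial Lytchak--Thorbergsson characterization; once polarity is in hand, the remaining assembly of Chevalley's theorem in its polar form, Schwarz's theorem, and polynomial detection via line restrictions is entirely routine.
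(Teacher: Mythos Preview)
Your reduction to slice representations, the polarity step via Lytchak--Thorbergsson, and the appeal to Michor's polar Chevalley theorem are exactly the paper's route; the paper then simply records that the remaining isometry between two flat Euclidean orbifolds is a diffeomorphism by the orbifold version of Myers--Steenrod (this is the sentence just before Corollary~\ref{coro}). You instead attempt to prove that last step directly via line restrictions, and here there is a genuine gap.

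The claim that every affine line $\ell\subset\mathfrak a$ projects to a geodesic of $\mathfrak a/W$ is false in the metric sense: whenever $\ell$ crosses a reflecting wall the projected curve folds back on itself rather than being locally shortest (already for $\mathfrak a=\R$, $W=\{\pm1\}$, the line $t\mapsto t$ projects to $t\mapsto|t|$, which is not a shortest path near $0$). Since $J$ is only known to preserve \emph{metric} geodesics, you cannot conclude that the image curve under $J$ lifts to a straight line in $\R^n$, and the line-restriction characterisation then only yields that $\tilde g$ is \emph{piecewise} polynomial on the open Weyl chambers. Continuity together with $W$-invariance does not force these pieces to patch to a single polynomial (witness $|x|$ on $\R$). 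The repair is to observe that the restriction of $J$ to the principal stratum is an isometry between flat, convex, simply-connected open subsets of Euclidean space, hence extends to a linear orthogonal map $A:\mathfrak a\to\R^n$ conjugating $W$ to $\Gamma$; then $\tilde g=g\circ A$ globally and is visibly a polynomial. That is precisely the orbifold Myers--Steenrod step the paper invokes.
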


  A few comments before we proceed.  The assumption that $X$ is (isometric to) a Riemannian orbifold  is satisfied in many  geometric
 situations,
  for instance, for actions of cohomogeneity at most $2$, for variationally complete or polar actions (\cite{LT}). The quotient $X$ is a Riemannian orbifold if and only if the action is locally diffeomorphic to a polar action, for which reason, such actions are called \emph{infinitesimally polar}. Another equivalent formulation is that all slice representations are polar.  However, for polar 
actions (representations)  \tref{firstthm} follows from the proof of Michor (\cite{Michor1}, \cite{Michor2}). The reduction of the 
global to the infinitesimal problem discussed in Subsection \ref{subsec2} provides now a proof of \tref{firstthm}. (A slightly different proof of the Theorem will be explained in the last section). 

From the theorem of Myers-Steenrod one deduces that the smooth structure  on a Riemannian orbifold is uniquely determined by the metric.
Hence we get 

\begin{cor} \label{coro}
The answer to Question \ref{firstq} is affirmative if $M/G$ is isometric to a Riemannian orbifold, i.e., if
the action of $G$ on $M$ is infinitesimally polar. 
\end{cor}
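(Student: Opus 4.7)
The plan is essentially a tautological deduction from \tref{firstthm}. The main observation is that the hypothesis ``isometric to a Riemannian orbifold'' is preserved by isometries: if $M/G$ is isometric to a Riemannian orbifold $B$ via some isometry $J \colon M/G \to B$, then any isometry $I \colon M/G \to N/H$ produces an isometry $J \circ I^{-1} \colon N/H \to B$ onto the \emph{same} target orbifold $B$. Thus both $M/G$ and $N/H$ fall under the hypothesis of \tref{firstthm} with a common orbifold target, without any need to pick separate orbifolds for the two quotients.

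Next, I would invoke \tref{firstthm} for each of these two isometries. It yields that $J$ is a diffeomorphism between the quotient smooth structure on $M/G$ and the underlying smooth orbifold structure of $B$, and that $J \circ I^{-1}$ is a diffeomorphism between the quotient smooth structure on $N/H$ and the orbifold structure of $B$. Composing the latter (inverted) with the former gives
\[ I = (J \circ I^{-1})^{-1} \circ J \colon M/G \to N/H, \]
a composition of two diffeomorphisms (factoring through the smooth orbifold structure on $B$). Hence $I$ is itself a diffeomorphism of the quotient smooth structures on $M/G$ and $N/H$, answering \ref{firstq} affirmatively under the stated hypothesis.

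There is no substantive obstacle beyond \tref{firstthm}; in particular, no separate Myers-Steenrod theorem for Riemannian orbifolds is required, precisely because the two quotients are being compared through a single fixed orbifold $B$ rather than through two a priori distinct orbifolds $B_1 \cong M/G$ and $B_2 \cong N/H$. Had one set up the proof with two distinct orbifold targets, then the key missing step would be showing that the induced orbifold isometry $B_1 \to B_2$ is smooth, which is the orbifold Myers-Steenrod statement and would constitute the genuine technical heart of the argument; the chosen setup avoids this detour entirely, and the identification of ``isometric to a Riemannian orbifold'' with the condition that the action be infinitesimally polar (all slice representations polar) then makes the equivalent formulation in the corollary immediate.
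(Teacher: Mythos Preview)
Your argument is correct. The paper takes a slightly different route: it first observes that \tref{firstthm} identifies the quotient smooth structure with an orbifold smooth structure, and then invokes the orbifold version of Myers--Steenrod (the smooth structure on a Riemannian orbifold is determined by its metric) to conclude that any isometry between two such quotients is a diffeomorphism. Your setup, routing both quotients through a \emph{single} common orbifold $B$ and applying \tref{firstthm} twice, bypasses this appeal to orbifold Myers--Steenrod entirely, exactly as you anticipate in your final paragraph. This is a marginally more self-contained argument; the paper's version, on the other hand, makes transparent the conceptual point that the quotient smooth structure \emph{is} an orbifold structure, so that the corollary becomes literally an instance of Myers--Steenrod for orbifolds.
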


We are going to deduce the affirmative answer to our main question in another special case. Again the proof will be an easy
consequence of known results. 
\begin{thm}  \label{secondthm}
Let $X=M/G$ and $Y=N/H$ be of dimension at most $3$. Then each isometry $I:X\to Y$ is a diffeomorphism. 
\end{thm}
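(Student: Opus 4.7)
The plan is to reduce the problem, via the slice theorem and the discussion in Subsection~\ref{subsec2}, to the case of orthogonal representations, and then to show that quotients of such representations of dimension at most $3$ are always Riemannian orbifolds. \cref{coro} then finishes the proof.

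Concretely, smoothness of $I$ is a local question, and near each point $x \in X$ the map $I$ is a diffeomorphism if and only if the induced isometry $D_x I \colon T_x X \to T_{I(x)} Y$ between the metric tangent spaces is one. These tangent spaces are isometric to quotients $V_p^\perp / G_p$ of slice representations, and have dimension bounded by $\dim X \le 3$. Hence it suffices to establish the following assertion: if $V$ is an orthogonal representation of a compact Lie group $G$ with $\dim(V/G) \le 3$, then $V/G$ is (isometric to) a Riemannian orbifold. Once this is known, \cref{coro} shows that $D_x I$, and hence $I$ on a neighborhood of $x$, is a diffeomorphism.

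The assertion is trivial in cohomogeneity at most $1$ (the quotient is a point, $\mathbb{R}$, or $[0,\infty)$) and classical in cohomogeneity $2$: here either a direct inspection of the possible isotropy types, or the classification of cohomogeneity-$2$ representations going back to Hsiang--Lawson and Straume, shows that the representation is polar with quotient a flat $2$-orbifold. The cohomogeneity-$3$ case is the main obstacle. A natural route is induction on $\dim V$: at any nonzero $v \in V$ the slice representation on $V_v^\perp$ acts with quotient of dimension at most $3$ but on a strictly smaller vector space, so by the inductive hypothesis its quotient is an orbifold; this already shows that $V/G$ is an orbifold on the complement of the origin, and it remains to extend the orbifold structure across $0$. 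To handle this last step one appeals to the classification of cohomogeneity-$3$ orthogonal representations of compact Lie groups, all of which are polar, so that $V/G$ is globally isometric to the quotient of $V$ by a finite reflection group; an intrinsic Alexandrov-geometric argument replacing this classification step would of course be preferable.
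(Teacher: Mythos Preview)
Your reduction to slice representations and the treatment of cohomogeneity at most $2$ are correct, and your inductive argument does show that $V/G$ is an orbifold away from the origin. The gap is in the final step: it is \emph{not} true that every cohomogeneity-$3$ orthogonal representation is polar. The simplest counterexamples are the circle actions on $\mathbb{C}^2\cong\mathbb{R}^4$ given by $t\cdot(z_1,z_2)=(t^{p}z_1,t^{q}z_2)$ for nonzero integers $p,q$. These have three-dimensional quotient, but the quotient is not a flat orbifold --- for $p=q=1$, for instance, it is the metric cone over a round $2$-sphere of radius $\tfrac12$ --- whereas the quotient of a polar representation is always isometric to $\Sigma/W$ for a finite group $W$ acting on a Euclidean space $\Sigma$, hence flat. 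Consequently $V/G$ is not a Riemannian orbifold in these cases, and \cref{coro} does not apply; your proposed orbifold extension across the origin simply fails here.

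The paper's proof confronts precisely this residual non-polar case. After reducing to representations and disposing of the polar case via \cref{coro}, one enlarges the groups to maximal orbit-equivalent ones and applies the Luna--Richardson reduction to generalized sections, arriving at representations $(\hat V,\hat G)$ and $(\hat W,\hat H)$ with trivial principal isotropy and no orbit-equivalent enlargement. The classification of cohomogeneity-$3$ representations in \cite{str} then forces $\hat G$ and $\hat H$ to be one-dimensional with $\hat V=\hat W=\mathbb{R}^4$, i.e., exactly the circle actions above; moreover the two representations must be equivalent, so $\hat I$ is a self-isometry of $\mathbb{R}^4/\hat G$. One then uses (again from \cite{str}) that every such self-isometry is induced by an element of the normalizer of $\hat G$ in $O(4)$, and hence preserves the smooth structure. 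Thus the non-orbifold quotients are handled by an explicit identification of their isometries, not by forcing them into the orbifold framework.
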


\begin{proof}
If the dimension of $X$ is at most $2$, then the action is infinitesimally polar and the result follows from \cref{coro}.
Thus we may assume that $X$ and $Y$ have dimension $3$. Proceeding as in Subsection \ref{subsec2},  we may assume that $M=V$ and $N=W$
are real vector space representations of $G$ and $H$ respectively (we might replace $G$ and $H$ by corresponding isotropy groups).
If one of the representations is polar, so is the other, and the result follows again from  \cref{coro}. Thus we may and will assume that the representations are not polar.  

 We may replace $G$ and $H$ by larger groups $G'\subset O(V)$ and $H'\subset O(V)$ having the same orbits as $G$ and $H$ respectively, whenever it is possible.   Using the theorem of Luna-Richardson, we may replace $(G',V)$ by the generalized section
$(N(G' _p) /G'_p ,V^{G'_p})$ and perform the same operation on $(H',W)$.  
In this way we get an isometry $\hat I :\hat V /\hat G \to \hat W /\hat H$, where $\hat G$ and $\hat H$ act on
$\hat V$ and $\hat W$ respectively, such that the principal isotropy groups are trivial and the groups cannot be enlarged
without enlarging the orbits.  Due to the theorem of Luna-Richardson, all maps  we used producing the ''reduction`` $\hat I$ are 
diffeomorphism. Hence it suffices to prove that $\hat I$ is a diffeomorphism.  

Note that the actions of $\hat G$ (and $\hat H$) are not polar. Now we invoke the classification of all representations of cohomogeneity $3$, as it is discussed in \cite{str}.  From the above assumptions on the representations we deduce that $\hat G$ and $\hat H$ are
one-dimensional and that $\hat V=\hat W=\mathbb R^4$.  In this case, using Section 4 of \cite{str}, one deduces that $\hat G =\hat H$,
and that the representations of $\hat H$ and $\hat G$ on $\mathbb R^ 4$ are equivalent. Hence we may assume that $\hat I$ is an isometry of $\R ^4 /\hat G$ to itself.  In Section 4 of \cite{str} it is shown that any such isometry is induced by an element $J$ in the normalizer of $\hat G$ in $O(4)$. In particular, $\hat I$ preserves the smooth structure.
\end{proof}

\subsection{What to do in general?} The proof of  \tref{firstthm}  as described above is algebraic.
However, at least after the reduction to the case of representations, there is a more geometric proof 
of \tref{firstthm}, essentially contained in  \cite{Terng}, that uses the geometric properties of the Laplacian.
We hope that the use of invariant  differential operators (cf. \cite{mendes}) may help to understand the structure of smooth functions on quotients  and to answer Question \ref{firstq}.

 Another algebraic way,  chosen in the proof of \tref{secondthm}  consists in understanding (classification) 
of representations having isometric quotients.  The description of such \emph{quotient equivalence classes}
of representations is probably possible in each concrete case, but seems to be difficult in general.  At least in small
codimensions, it should be possible to prove the analogue of \tref{secondthm} along the same lines.
Some ideas and results concerning quotient-equivalence classes can be found in  \cite{GL}. These results and the proof
of \tref{secondthm}  motivate the following question. An affirmative answer to it would provide an answer to  Question \ref{firstq}   as well.

\begin{quest}
Let $(V,G)$ and $(W,H)$ be two representations and let $I:V/G \to W/H$ be an isometry. Can $I$ be obtained as a compositions of following three  kinds of  isometries between quotients: The isometry induced by an orbit equivalence
(i.e., replacing a group by a larger group having the same orbit);  the isometry induced by the reduction as in the theorem
of Luna-Richardson;  the isometry $\hat I$ of a quotient $U/K$ to itself induced by an element $g$ in the normalizer of 
$K$ in the orthogonal group $O(U)$?
\end{quest}

\section{Basic forms} 
The quotient space $X=M/G$ contains an open dense Riemannian manifold $X_0$ that consists of the set of
principal orbits. A smooth function on $X$ as defined in the previous section, is just a smooth function on the manifold  $X_0$, whose pull-back to $M$ extends to a smooth function on $M$.  It is natural to define generalizations of smooth forms in the same way. 

A \emph{basic $p$-form} on 
an open subset $U$ of $X$ is a smooth $p$-form on $X_0 \cap U$, whose pull-back to $M$ extends to a smooth $p$-form on 
$\pi ^{-1} (U)$.   The basic forms constitute  a complex (of sheaves on $X$)  and its cohomology coincides with the  singular cohomology of $X$  (The sheaves are fine and the usual lemma of Poincare holds with the usual proof, cf.  \cite{Kosz}).
Despite this fact and the  very natural definition of the complex, the following question seems to be difficult in general:

\begin{quest} \label{secondq} 
Does the smooth structure determine the basic forms on a quotient? In other words,
given a diffeomorphism $F:M_1/G_1 \to M_2/G_2$, does $F$ induce a bijection between basic forms?
\end{quest}     

The following question generalizes Question \ref{firstq}:
\begin{quest} \label{thirdq}
Does an isometry $I:M/G \to N/H$ always preserve the sheaves of basic forms?
\end{quest}

The arguments from Subsection \ref{subsec2} apply to this situation as well and reduce both question to the case of representations.
Again, in the case of infinitesimally polar actions the answer to both question is affirmative and an easy  consequence of known results:

\begin{thm}
If $X=M/G$ is isometric to a smooth Riemannian orbifold then the basic forms of $X$ are precisely the smooth forms of the underlying smooth orbifold.
\end{thm}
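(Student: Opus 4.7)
The plan is to reduce to the infinitesimal setting via the slice theorem, as in Subsection~\ref{subsec2}, and then invoke Michor's theorem on basic forms of polar representations. Both the basic forms on $X$ and the smooth forms on the orbifold $B$ form fine sheaves (partitions of unity from smooth quotient functions extend to forms), so it is enough to verify the identification on arbitrarily small neighborhoods of each point.

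Fix $p\in M$ with image $x=\pi(p)\in X$. The slice theorem identifies a neighborhood of $x$ in $X$ with a neighborhood of the base point in $V_p^{\perp}/G_p$, and this identification sends basic $p$-forms on $X$ to $G_p$-basic $p$-forms on $V_p^{\perp}$; this is the natural differential-form analogue of the assertion for invariant functions used in Subsection~\ref{subsec2}, and reduces the claim to the corresponding statement for the slice representation. By \cref{coro}, the hypothesis that $X$ is a Riemannian orbifold forces the action to be infinitesimally polar, so the slice representation $(V_p^{\perp},G_p)$ is polar. Choose a section $\mathfrak{a}\subset V_p^{\perp}$ with finite Weyl group $W$; the inclusion induces an isometry $\mathfrak{a}/W\to V_p^{\perp}/G_p$ which is a canonical orbifold chart at $x$. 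By Michor's extension of the Chevalley restriction theorem (\cite{Michor1,Michor2}), the restriction map is an isomorphism from $G_p$-basic $p$-forms on $V_p^{\perp}$ onto $W$-invariant $p$-forms on $\mathfrak{a}$, and the latter are by definition the smooth orbifold $p$-forms on the chart $\mathfrak{a}/W$.

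To pass from these local identifications to the statement for $B$, one uses that the smooth structure of a Riemannian orbifold is uniquely determined by its metric (an orbifold analogue of Myers--Steenrod, essentially the content of \tref{firstthm}). Consequently the orbifold charts produced from the slice representations agree, via $I$, with the given orbifold charts on $B$, so that the local Michor isomorphisms are compatible with the orbifold structure of $B$ around each point. A standard partition-of-unity argument on $X$ then assembles the pointwise statements into the desired global isomorphism of sheaves.

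The main obstacle is Michor's basic-form theorem for polar representations, which is invoked here as a black box; everything else (the slice-theoretic reduction, the orbifold chart compatibility, the partition-of-unity globalization) is entirely parallel to the corresponding steps in the proof of \tref{firstthm}.
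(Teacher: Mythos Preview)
Your proof is correct and follows essentially the same route as the paper: reduce via the slice theorem to the slice representations, observe that these are polar because the action is infinitesimally polar, and then invoke Michor's restriction theorem for basic forms of polar representations. One minor slip: the implication ``$X$ is an orbifold $\Rightarrow$ the action is infinitesimally polar'' is the discussion following \tref{firstthm}, not \cref{coro}.
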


\begin{proof}
The reduction explained in Subsection \ref{subsec2} reduces the question to isotropy representations. For infinitesimally
polar actions, the isotropy representations are polar, in which case the result is known (\cite{Michor1}, \cite{Michor2}).
\end{proof}

Thus we deduce:

\begin{cor}
The answer to Question \ref{secondq} and Question \ref{thirdq} is affirmative for infinitesimally polar actions.
\end{cor}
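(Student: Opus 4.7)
The plan is to combine the immediately preceding theorem (identifying basic forms on an orbifold quotient with smooth orbifold forms) with the results already established for the function level, namely \tref{firstthm} and \cref{coro}.

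First I would handle Question \ref{secondq}. Let $F: M_1/G_1 \to M_2/G_2$ be a diffeomorphism, with both actions infinitesimally polar. By definition of infinitesimally polar, each quotient $X_i = M_i/G_i$ is (isometric to) a smooth Riemannian orbifold $B_i$. Invoking \tref{firstthm} on each side, the isometry $X_i \to B_i$ is a diffeomorphism, so the quotient smooth structure on $X_i$ coincides with the smooth structure of the underlying orbifold $B_i$. Hence $F$, being a diffeomorphism of quotient smooth structures, is the same as a diffeomorphism of smooth orbifolds $B_1 \to B_2$. Now by the preceding theorem, the basic forms on $X_i$ are precisely the smooth forms on $B_i$ in the orbifold sense. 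Since a diffeomorphism of Riemannian orbifolds pulls back smooth orbifold forms to smooth orbifold forms, $F$ induces a bijection between basic forms. This settles Question \ref{secondq}.

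Next I would address Question \ref{thirdq}. Let $I: M/G \to N/H$ be an isometry, both actions being infinitesimally polar. By \cref{coro}, $I$ is automatically a diffeomorphism between the two quotient smooth structures. Applying the conclusion already obtained for Question \ref{secondq}, the diffeomorphism $I$ preserves the sheaves of basic forms.

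No step is really an obstacle here, since the work was done in establishing \tref{firstthm}, \cref{coro}, and the preceding theorem identifying basic forms with orbifold smooth forms. The only point requiring mild care is the verification that a quotient-smooth-structure diffeomorphism between two spaces each coinciding with an orbifold is indeed an orbifold diffeomorphism; this is immediate once one observes that the two notions of smooth $0$-forms (and hence of pullback of functions) are the very same sheaf, by the preceding theorem applied to each side.
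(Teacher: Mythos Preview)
Your argument is correct and is precisely the deduction the paper has in mind when it writes ``Thus we deduce''; you have simply made explicit the two-step reasoning (basic forms $=$ orbifold smooth forms via the preceding theorem, and the identification of the quotient smooth structure with the orbifold smooth structure via \tref{firstthm}/\cref{coro}) that the paper leaves implicit. The only subtlety you flag---that a function-level diffeomorphism of orbifolds is an orbifold diffeomorphism and hence preserves smooth $p$-forms---is indeed standard and is all that is needed to close the argument.
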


\section{Singular Riemannian foliations} 
\subsection{Definition} The definition  of smooth structures, in particular  of basic forms, on a quotient space 
$X=M/G$  does not depend on the  group action of $G$ on $M$, but only on the decomposition of $M$ into $G$-orbits.
 Such a decomposition is a 
special case of a \emph{singular Riemannian foliation}, a notion that generalizes Riemannian foliations and decomposition in orbits of isometric group actions.  We just recall the definition here and refer the reader to \cite{Molino}, \cite{Alexgor}, \cite{LT}  for more on details.

A \emph{transnormal system} on a Riemannian manifold $M$ is a decomposition of $M$ into pairwise disjoint  isometrically immersed submanifolds $M=\cup_{x} L(x)$, called \emph{leaves}, such that a geodesic starting orthogonally to a leaf remains orthogonal to all leaves it intersects. A\emph{transnormal system} is a called a \emph{singular Riemannian foliation}  if for each leaf $L$ and each $v\in TL$ with footpoint $p,$ there is a vector field $X$ tangent to the leaves so that  $X(p)=v.$ 
While there is some evidence that the answer to the following question is affirmative (see the final remarks of
  \cite{Wilk}), the  question seems to be highly non-trivial:

\begin{quest} \label{trans}
 Is any transnormal system automatically a singular Riemannian foliation?
\end{quest}

\subsection{Smooth structure}
  From now on let $\mathcal F$ be a singular Riemannian foliation on a Riemannian manifold $M$.  A smooth $p$-form
on an open subset $V\subset M$ is called \emph{basic} if the restriction of the form  to the set of regular leaves  $V_{0} \subset V$ is the pull-back of a smooth $p$-form on a local quotient, locally around each point $x\in  V_0$. 
Again, the basic forms constitute a complex of sheaves,  whose cohomology, the \emph{basic cohomology} is 
a very important invariant of the foliation.

 Again locally around each point $p\in M$ the foliation is locally diffeomorph to a (uniquely defined) singular Riemannian
 foliation $T_p \mathcal F$ on the  Euclidean space $T_p M$,  which is invariant with respect to scalar multiplications  (cf. \cite{LT}).
 
   This can be used to reduce questions to the case of singular Riemannian foliations on Euclidean spaces.   However,  most fundamental results known in case of representations (for instance the main theorems from  \cite{Schwarz2} and \cite{Schwarz3}) have not been 
answered in this more general situation until now. We would like to formulate:

\begin{quest}
Let $\mathcal F$ be a  singular Riemannian foliation on a Euclidean space $\mathbb R^n$. Assume that the leaf through
the origin $0$  consists of only one point.  Is the space   of basic functions finitely generated?
\end{quest}

\subsection{Smoothness of isometries}
The singular Riemannian foliation $\mathcal F$ is called \emph{closed} if all of its leaves are closed.
Assume now that $M$ is complete and that $\mathcal F$ is closed. Then the set of leaves $X=M/\mathcal F$
carries a natural quotient metric and the sheaf of basic forms should be considered as a sheaf on $X$.
Again there are arbitrary fine partitions of unity consisting of basic functions and the usual Lemma of Poincare
holds with the usual proof, thus showing  that the basic cohomology coincides  in this case the  singular cohomology
(with real coefficients) of the metric space $X$.

 The following  generalization of  Question \ref{firstq} and Question \ref{thirdq} is further remote from algebra and representation theory, and   one can hope, that the right geometric answer to Question  \ref{firstq} would answer the following question as well. 

\begin{quest} \label{fourthq}
Let $M,N$ be complete Riemannian manifolds  with closed singular Riemannian foliations $\mathcal F$ and $\mathcal G$ respectively.  Does an isometry $I:M/\mathcal F \to N/\mathcal G$ induce a bijection between the  basic forms?  
\end{quest}

While we believe  that the answer to this question is affirmative in general, we only know a proof in a very special situation.
Recall that a singular Riemannian foliation $\mathcal F$ is called \emph{polar}  (also known as a singular Riemannian foliation with section, cf. \cite{Alexandrino}) if   any point $p \in M$ is contained in a submanifold $\Sigma$ (called \emph{section}) that intersects all leaves orthogonally and the regular leaves transversally. 
A singular Riemannian foliation $\mathcal F$ is called \emph{infinitesimally polar} if and only if at all points $p\in M$  
the infinitesimal foliation $T_p \mathcal F$ is polar (\cite{LT}). Again, all singular Riemannian foliations of codimension at most $2$ are infinitesimally polar.
 If $M$ is complete and $\mathcal F$ is closed, infinitesimal polarity  is equivalent to the assumption that $M/\mathcal F$ is isometric to a smooth Riemannian orbifold.  We have:

\begin{thm} \label{sing}
Let $\mathcal F$ be a closed singular Riemannian foliation on a complete Riemannian manifold $M$.
If there is an isometry $I:M/\mathcal F \to B$ to a smooth Riemannian orbifold $B$ then $I$ induces 
an isomorphism between the sheaves of smooth forms on $B$ and the sheaves of basic forms on
$M/\mathcal F$.  In particular, the answer to Question \ref{fourthq} is affirmative for infinitesimally polar foliations. 
\end{thm}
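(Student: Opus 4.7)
The plan is to follow the reduction-to-infinitesimal-polar-representation strategy that underlies \tref{firstthm}, carrying differential forms through each step. Because basic forms constitute a fine sheaf on $M/\mathcal F$ (partitions of unity are built from distances to leaves, which are basic) and $I$ is a homeomorphism, the statement is local on $M/\mathcal F$, so it suffices to work near a single point $x\in M/\mathcal F$ with image $y=I(x)\in B$ and a chosen preimage $p\in M$. Concretely, I must identify the germ at $x$ of basic $p$-forms on $M/\mathcal F$ with the germ at $y$ of smooth orbifold $p$-forms on $B$, and then check that $I$ realizes this identification.

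Next I would linearize on both sides. On the $M$-side, by the infinitesimal model of \cite{LT}, a neighborhood of $p$ in $M$ is foliated-diffeomorphic to a neighborhood of $0$ in $(T_pM, T_p\mathcal F)$, and this diffeomorphism descends to a local identification of $M/\mathcal F$ near $x$ with $T_pM/T_p\mathcal F$ near the apex, pulling basic forms back to basic forms. On the orbifold side, the exponential at $y$ identifies a neighborhood of $y$ in $B$ with a neighborhood of the apex of $T_yB=\R^n/\Gamma$, for a finite $\Gamma\subset O(n)$ generated by reflections, and pulls smooth orbifold forms back to $\Gamma$-invariant smooth forms on $\R^n$. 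Because $M/\mathcal F$ is globally isometric to the Riemannian orbifold $B$, the foliation $\mathcal F$ is infinitesimally polar (as recorded just above the theorem), so $T_p\mathcal F$ is polar, with some section $\Sigma\subset T_pM$ and finite Weyl group $W$.

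The problem now reduces to two local facts. First, a Chevalley--Michor restriction theorem for basic forms on a polar singular Riemannian foliation: the restriction map from basic $p$-forms on $T_pM$ to smooth $p$-forms on $\Sigma$ is an isomorphism onto the $W$-invariant forms, i.e., onto smooth orbifold $p$-forms on $\Sigma/W$. This is due to Michor (\cite{Michor1}, \cite{Michor2}) in the polar-representation case and extended to polar singular Riemannian foliations via \cite{Alexandrino}. Second, an isometry between the two Euclidean orbifolds $\Sigma/W$ and $\R^n/\Gamma$ is automatically a diffeomorphism by \tref{firstthm} and hence preserves smooth orbifold forms. Composing these identifications with the local linearizations above matches the germs at $x$ and $y$, and a basic partition of unity glues the local isomorphisms into the desired sheaf isomorphism.

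The main obstacle is the Chevalley--Michor restriction theorem for basic differential forms in the setting of polar singular Riemannian foliations, rather than polar group actions: it is the one step that does not follow from the polar-representation case by soft arguments and must be verified foliation-theoretically. Once that input is in place, the rest of the argument is a standard linearize-and-glue procedure, with the orbifold Myers--Steenrod step supplied by \tref{firstthm}.
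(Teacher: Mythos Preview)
Your proposal is correct and is precisely the argument the paper alludes to when it says that a short proof ``can be obtained along the same lines as the proof of \tref{firstthm} relying on \cite{Alexgor} instead of \cite{Michor1}''. The paper, however, chooses to write out a different route: it passes to the geometric resolution $F:(\hat M,\hat{\mathcal F})\to (M,\mathcal F)$ of \cite{Lyt}, where $\hat{\mathcal F}$ is a \emph{regular} Riemannian foliation, so that $\hat{\mathcal F}$-basic forms are automatically the smooth orbifold forms on $\hat M/\hat{\mathcal F}$; the content is then \pref{help}, showing that $F$ induces a bijection on basic forms, whose only nontrivial local step is again the restriction theorem of \cite{Alexgor}. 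The two arguments are, as the paper itself says, ``only slightly different'': both localize, linearize, and ultimately rest on the Chevalley--Michor restriction theorem for polar singular Riemannian foliations. The resolution route buys a formulation that also applies to non-closed foliations and avoids a separate orbifold Myers--Steenrod step, while your direct linearization mirrors the treatment of \tref{firstthm} more closely. Two small remarks: the foliation version of the restriction theorem you need is in \cite{Alexgor} rather than \cite{Alexandrino}; and your claim that the local orbifold group $\Gamma$ is generated by reflections is not part of the hypotheses on $B$---it follows a posteriori from the isometry with $\Sigma/W$, but in any case your argument does not use it.
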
 

  A short proof of the theorem above can be obtained along the same lines as the proof of \tref{firstthm} 
relying on \cite{Alexgor}  instead of \cite{Michor1}. We are going to explain  another  only  slightly different proof that is also valid for non-closed foliations.   To do this we recall from \cite{Lyt}
 that for any infinitesimally polar singular Riemannian foliation
$\mathcal F$ on a Riemannian manifold $M$ there is a Riemannian manifold $\hat M$ (called the \emph{geometric resolution}
of $M$) with a regular (!)  Riemannian foliation $\hat {\mathcal F}$ and a smooth surjective map $F:\hat M \to M$,
such that the following holds true.  The preimages of leaves of $\mathcal F$ are leaves of $\hat {\mathcal F}$. The map $F$  is a diffeomorphism, when restricted to $\hat M _0$, the preimage 
of the regular part of $M$. The map $F$ preserves the lengths of all horizontal curves. If $M$ is complete and $\mathcal F$
is closed then so are $\hat M$ and $ \hat {\mathcal F}$,  and $F$ induces an isometry $\bar  F$   between the quotients  
$\hat M / \hat {\mathcal F}$ and $M/\mathcal F$.  Note that since $\hat {\mathcal F}$ is a regular foliation, the $\hat{\mathcal{ F}}$-basic forms 
are the smooth forms on the quotient orbifold $\hat M /\hat {\mathcal F}$ (cf. \cite{Molino}).

Thus the following observation generalizes and proves \tref{sing}.

\begin{prop} \label{help}
Let $\mathcal F$ be an infinitesimally polar singular Riemannian foliation on a Riemannian manifold $M$. 
Let $ F:(\hat M, \hat {\mathcal F}) \to (M,\mathcal F)$ be the geometric resolution of $\mathcal F$.
Then $F$ induces a bijection between the sheaves of basic forms.
\end{prop}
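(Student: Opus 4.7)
The plan is to show that the pullback $F^*$ is the desired bijection. First I would verify that $F^*$ sends $\mathcal F$-basic forms to $\hat{\mathcal F}$-basic forms: smoothness is preserved because $F$ is smooth, and on the regular part $\hat M_0$ the map $F$ restricts to a diffeomorphism onto the regular part $M_0$ of $M$, carrying regular leaves of $\hat{\mathcal F}$ to regular leaves of $\mathcal F$, so the local-quotient condition defining basicity is preserved there. Injectivity is then immediate from the density of $M_0$ in $M$: if $F^*\omega = 0$, then $\omega$ vanishes on $M_0$, and by smoothness $\omega$ vanishes on all of $M$.

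The content of the statement is surjectivity. Given a $\hat{\mathcal F}$-basic form $\hat\omega$ on $\hat M$, the diffeomorphism $F|_{\hat M_0}$ transports $\hat\omega|_{\hat M_0}$ to a smooth $\mathcal F$-basic form $\omega_0$ on $M_0$. The entire proposition reduces to showing that $\omega_0$ extends to a smooth form $\omega$ on all of $M$: once this is established, $F^*\omega$ agrees with $\hat\omega$ on the dense open set $\hat M_0$, hence everywhere by continuity, and $\omega$ is automatically basic since it is basic on the regular stratum.

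Since smoothness is local, the extension question can be studied near an arbitrary point $p\in M$. By the local structure of singular Riemannian foliations, a neighborhood of $p$ is foliatedly diffeomorphic to a neighborhood of the origin in the infinitesimal foliation $T_p\mathcal F$ on $T_pM$, which is polar by the hypothesis of infinitesimal polarity. Since the geometric resolution of \cite{Lyt} is compatible with this linearization, the extension question reduces to the corresponding statement for polar representations on Euclidean spaces. In the polar case, the invariant-theoretic description of Michor \cite{Michor1,Michor2} identifies $\mathcal F$-basic forms on $V$ with smooth Weyl-invariant forms on the section $\Sigma$, and the parallel identification on the resolution side (whose quotient is the smooth orbifold $\Sigma/W$) matches these spaces of forms, producing the required smooth extension of $\omega_0$ across the singular strata.

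The main obstacle is precisely this last extension step: \emph{a priori} $\omega_0$ is defined only on the regular part $M_0$, and one must rule out that pushing $\hat\omega$ down through $F$ creates singularities at points where $F$ is not a local diffeomorphism. The identification, via the polar local model, of both $\mathcal F$-basic and $\hat{\mathcal F}$-basic forms with the same space of smooth Weyl-invariant forms on $\Sigma$ is exactly what delivers smoothness across the singular strata, and this is where infinitesimal polarity is indispensable. All other ingredients—preservation of smoothness under pullback, density of the regular part, and compatibility of the geometric resolution with linearization—are routine.
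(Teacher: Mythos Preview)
Your proposal follows essentially the same route as the paper: take $F^*$ as the candidate bijection, obtain injectivity from the fact that $F$ restricts to a diffeomorphism on the regular part, localize the surjectivity question at a point $p\in M$, and reduce via the infinitesimal model to the polar situation on a Euclidean space.

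The one substantive discrepancy is in the final step. You write that the question ``reduces to the corresponding statement for polar representations on Euclidean spaces'' and then invoke Michor's restriction theorem \cite{Michor1,Michor2}. But the infinitesimal foliation $T_p\mathcal F$ of a general singular Riemannian foliation is only a polar \emph{foliation} on $T_pM$; it need not be the orbit foliation of a polar representation of a compact group (inhomogeneous isoparametric foliations of Euclidean space exist). Michor's results apply to polar group actions and thus do not cover this case directly. The paper handles exactly this point by citing \cite{Alexgor}, which extends the Chevalley-type identification of basic forms with Weyl-invariant forms on the section to arbitrary polar foliations. With that substitution your argument is complete and coincides with the paper's proof.
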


\begin{proof}
Since $F$ maps leaves to leaves, the pull-back by $F$ defines a map from basic forms on $V\subset M$ to
basic forms on $F^{-1} (V) =\hat V$.   Since the restriction to $\hat M _0$ is a diffeomorphism, the pull-back by $F$ is injective. And it remains to prove, that any basic form on $\hat V$ is the pull-back of some form on $V$.
By injectivety, this question is local and it is enough to prove it for a \emph{distinguished neighborhood} $U$
of  a given point $x\in M$.  Identifying the restriction of   $\mathcal F$  to $U$ with a polar foliation and using
the fact that the resolution map  $F$ is constructed in a canonical and local way, we reduce the question to the
case of polar foliations on Euclidean spaces.   Here we apply \cite{Alexgor} to finish the proof. 
\end{proof}

\subsection{Conjecture of Molino}
We would like to discuss an application of Question \ref{fourthq} to the so called Conjecture of Molino.
Given a complete Riemannian manifold $M$, Molino has shown that for any regular Riemannian  foliation
$\mathcal F$ on $M$, the closure $\bar {\mathcal F}$ of $\mathcal F$  consisting of leaf closures of $\mathcal F$
is a singular Riemannian foliation (cf  \cite{Molino}).   Molino has conjectured that  the closure $\mathcal F$
of a \emph{singular} Riemannian foliation is a singular Riemannian foliation as well. 

 For a singular Riemannain foliation $\mathcal F$ on a complete Riemannian manifold $M$ any pair of leaves are equidistant.
Thus so are any pair of closures of two leaves. Therefore, the leaf closure $\bar {\mathcal F}$ is a \emph{transnormal system}.
Thus the problem 
whether $\bar { \mathcal F}$ is a singular Riemannian foliation amounts to   finding smooth vector fields tangent to 
$\bar {\mathcal  F}$ and generating this transnormal system. In particular, the conjecture of Molino
is a special case of Question \ref{trans}.

Using \cite{Schwarz3}, it is possible   to prove that an affirmative answer to Question \ref{firstq}
would prove the conjecture of Molino  for all singular Riemannian foliations, all of whose  infinitesimal foliations are 
given by isometric group actions. Similarly, an affirmative answer to Question \ref{fourthq} together with a generalization
of \cite{Schwarz3} to singular Riemannian foliations would prove the conjecture of Molino in general.  Our interests in the
smoothness issues discussed in this note originated from these observations.  
We would like to finish our exposition by sketching  the proof of the conjecture of Molino for infinitesimally polar foliations,
 a result previously shown in  \cite{ale} for polar foliations.

\begin{thm} \label{lastthm}
Let $ \mathcal F$  be an infinitesimally polar foliation on a complete Riemannian manifold $M$. 
Then the  closure  $\bar {\mathcal F}$ is
a singular Riemannian foliation.
\end{thm}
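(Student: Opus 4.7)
The plan is to reduce, via the geometric resolution of \pref{help}, to the classical Molino theorem for regular Riemannian foliations. Transnormality of $\bar{\mathcal F}$ comes essentially for free: since $M$ is complete, every pair of leaves of $\mathcal F$ is equidistant, hence so is every pair of leaf closures, and a geodesic orthogonal to one closed leaf of $\bar{\mathcal F}$ stays orthogonal to every closed leaf it meets. The real content of the theorem is therefore to exhibit, at each $p \in M$ and each vector $v$ tangent to the leaf closure $\bar L$ through $p$, a smooth vector field $X$ on $M$ that is tangent to $\bar{\mathcal F}$ everywhere and satisfies $X(p) = v$.

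Let $F : (\hat M, \hat{\mathcal F}) \to (M, \mathcal F)$ be the geometric resolution, so that $\hat{\mathcal F}$ is regular Riemannian. Applying the classical Molino theorem to $\hat{\mathcal F}$, the closure $\bar{\hat{\mathcal F}}$ is a singular Riemannian foliation on $\hat M$, and in particular is generated by an abundance of smooth vector fields tangent to its leaves. Since $F$ is continuous, surjective, and sends leaves to leaves, one checks that $F(\bar{\hat L}) = \bar L$ for every leaf $L = F(\hat L)$ of $\mathcal F$. Hence the leaves of $\bar{\mathcal F}$ are smoothly immersed submanifolds of $M$, and the tangent distribution to $\bar{\mathcal F}$ at a point $p = F(\hat p)$ coincides with the $dF_{\hat p}$-image of $T_{\hat p}\bar{\hat{\mathcal F}}$.

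It remains to produce enough smooth vector fields on $M$ tangent to $\bar{\mathcal F}$ to realize every tangent vector to $\bar{\mathcal F}$ at each point. This is a local statement, so I would work in a distinguished neighborhood $U$ of an arbitrary $p \in M$. By infinitesimal polarity at $p$ and the canonical, local nature of the construction of $F$ (see \cite{Lyt}), the restriction $\mathcal F|_U$ is identified with a polar foliation on an open subset of $T_p M$, and $F|_U$ is identified with its polar resolution. For polar foliations, this is precisely the content of \cite{ale}: smooth vector fields tangent to $\bar{\hat{\mathcal F}}$ on the resolution push down to smooth vector fields tangent to $\bar{\mathcal F}$ on the base, and their values span the tangent distribution of $\bar{\mathcal F}$ at every point. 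Combining such local descents with a smooth partition of unity on $M$ (available by the discussion in Subsection \ref{subsec2}) produces global smooth vector fields tangent to $\bar{\mathcal F}$ with the required spanning property.

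The main obstacle in this programme is precisely the local descent step: verifying that the smooth vector fields tangent to $\bar{\hat{\mathcal F}}$ on the regular resolution really descend to \emph{smooth} (and not merely continuous) vector fields on $M$. Infinitesimal polarity is used essentially here, via the polar local model and the theorem of \cite{ale}; indeed this is the step that makes the whole reduction work. Removing the polarity hypothesis would require a genuinely new smoothness input, which, as indicated in the preceding subsection, ties the problem back to Questions \ref{firstq} and \ref{fourthq} on smoothness of isometries between quotients.
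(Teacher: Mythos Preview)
Your overall architecture matches the paper's: resolve to $(\hat M,\hat{\mathcal F})$, apply Molino's theorem for regular foliations there, and push the resulting generating fields down through $F$. The divergence, and the gap, is in how the descent is executed.

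The paper does not push down arbitrary $\bar{\hat{\mathcal F}}$-tangent fields as a black box. It first invokes Molino's structural description of the closure of a \emph{regular} Riemannian foliation: locally, the fields generating $\bar{\hat{\mathcal F}}$ split as the fields of $\hat{\mathcal F}$ together with smooth \emph{horizontal} fields, namely the horizontal lifts of basic transversal Killing fields. This horizontality is the crux. Via the Riemannian metric, smooth $\hat{\mathcal F}$-basic horizontal vector fields on $\hat M$ are in bijection with basic $1$-forms on $\hat M$, and \pref{help} says exactly that every basic $1$-form on $\hat M$ is the $F$-pullback of a basic $1$-form on $M$. Dualizing back on $M$ yields smooth horizontal fields there which, together with the generators of $\mathcal F$, generate $\bar{\mathcal F}$.

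Your descent step instead appeals to \cite{ale} in the local polar model. This does not close the argument. The reference \cite{ale} establishes Molino's conjecture for a given polar foliation; it is not set up as a statement that fields on a resolution descend smoothly through the resolution map. And applied literally to the local model $\mathcal F|_U$, it would furnish generators for the closure of $\mathcal F|_U$ inside $U$, which in general is not $\bar{\mathcal F}|_U$ since leaf closure is a global phenomenon. What is actually needed is a purely local smoothness statement, independent of any closure --- that $\hat{\mathcal F}$-basic horizontal data on $\hat U$ descends smoothly to $U$ --- and that is precisely \pref{help} for $1$-forms (proved there via \cite{Alexgor}), combined with the metric duality between horizontal fields and basic $1$-forms. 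That duality is the idea missing from your write-up.
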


\begin{proof}
Consider the resolution $\hat M$ of $M$ discussed in the previous subsection.  The map $F: \hat M \to M$
is proper, thus the leaf closures $\hat {\mathcal F}$ are exactly the preimages of the ``leaves" of $\bar {\mathcal F}$.
Due to the theorem of Molino, the closure $\mathcal G$ of $\hat {\mathcal F}$ is a singular Riemannian foliation.
Moreover, locally, the generating smooth vector field of $\mathcal G$ are given by the vector fields generating 
$\hat {\mathcal F}$
and a family of smooth horizontal fields (``the  horizontal lifts of   basic Killing fields", cf. \cite{Molino}).
 However, by duality  with respect to our
Riemannian metric, smooth horizontal fields are in one-to-one correspondence with basic 1-forms.  Due to \pref{help}
these basic one forms descend to basic 1-forms on $M$.  Dualizing again, one obtains   the smooth horizontal vector fields  on $M$ that together with the generating vector fields of $\mathcal F$ generate $\bar {\mathcal F}$.
\end{proof}

\begin{rem}
The above proof shows that the vector fields generating the closure $\bar {\mathcal F}$ in \tref{lastthm}
can locally  be obtained by adding to the vector fields generating $\mathcal F$  the horizontal lifts of ``transversal Killing
fields", as expected by Molino. In the case of infinitesimally polar foliations, these horizontal vector fields are  uniquely determined by their restrictions  to any minimal stratum, cf.  \cite{ale}.
\end{rem}

\noindent\textbf{Acknowledgements} For helpful discussions and comments we would like to express our gratitude to Claudio Gorodski, Gerald Schwarz and Stephan Wiesendorf.

\bibliographystyle{alpha}
\bibliography{smooth}

\end{document}